\theoremstyle{plain}
\newtheorem{Thm}{Theorem}[section]
\newtheorem{Lemma}[Thm]{Lemma}
\theoremstyle{definition}
\newtheorem{Def*}{Definition}
\begin{document}

\title{Rigorous computer analysis of the Chow-Robbins game}

\author{Olle H\"aggstr\"om and Johan W\"astlund \\
\small Department of Mathematical Sciences,\\[-0.8ex]
\small Chalmers University of Technology, \\[-0.8ex] 
\small SE-412 96 Gothenburg, Sweden\\[-0.8ex]
\small \texttt{olleh@chalmers.se, wastlund@chalmers.se}
}
\date{\small \today} 

\maketitle

\begin{abstract} Flip a coin repeatedly, and stop whenever you want. Your payoff is the proportion of heads, and you wish to maximize this payoff in expectation. This so-called Chow-Robbins game is amenable to computer analysis, but while simple-minded number crunching can show that it is best to continue in a given position, establishing rigorously that stopping is optimal seems at first sight to require ``backward induction from infinity''.   

We establish a simple upper bound on the expected payoff in a given position, allowing efficient and rigorous computer analysis of positions early in the game. In particular we confirm that with 5 heads and 3 tails, stopping is optimal.
\end{abstract}


\section{The Chow-Robbins game}
The following game was introduced by Yuan-Shih~Chow and Herbert Robbins \cite{CR} in 1964: We toss a coin repeatedly, and stop whenever we want. Our payoff is the proportion of heads up to that point, and we assume that we want to maximize the expected payoff.

Basic properties of this game, like the fact that there is an optimal strategy that stops with probability 1, were established in \cite{CR}.
Precise asymptotical results were obtained by Aryeh Dvoretzky \cite{D} and Larry Shepp \cite{S}. 
In particular Shepp showed that for the optimal strategy, the proportion of heads required for stopping after $n$ coin tosses is asymptotically $$\frac12 + \frac{0.41996\dots}{\sqrt{n}},$$ where the constant is the root of a certain integral equation.
But as was pointed out more recently by Luis Medina and Doron Zeilberger \cite{MZ}, for a number of positions early in the game the optimal decisions were still not known rigorously. 

Let $V(a,n)$ be the expected payoff under optimal play from position $(a, n)$, by which we mean $a$ heads out of $n$ coin flips. The game is suitable for computer analysis, but there is a fundamental problem in that it seems one has to do ``backward induction from infinity'' in order to determine $V(a,n)$. Clearly \begin{equation} \label{rec} V(a,n) = \max\left(\frac an, \frac{V(a, n+1) + V(a+1,n+1)}2\right),\end{equation} but the ``base case'' is at infinity.

\section{Lower bound on $V(a,n)$}
In position $(a,n)$ we can guarantee payoff $a/n$ by stopping. Moreover, if $a/n<1/2$, then by the recurrence of simple random walk on $\mathbb{Z}$, we can wait until the proportion of heads is at least $1/2$. Therefore \begin{equation} \label{lower} V(a,n) \geq \max\left(\frac{a}{n},\frac12\right).\end{equation}
We can recursively establish better lower bounds by starting from the inequality \eqref{lower} at a given ``horizon'', and then working our way backwards using \eqref{rec}. An obvious approach is letting the horizon consist of all positions with $n=N$ for some fixed $N$. In practice it is more efficient to use \eqref{rec} only for positions where in addition $a\approx n/2$, say when $\left| a - n/2\right| \leq c\sqrt{N}$ for some suitable constant $c$, and to resort to \eqref{lower} outside that range. This allows a greater value of $N$ at given computational resources.

If in this way we find that $V(a,n) > a/n$, then in position $(a,n)$, continuing is better than stopping. For instance it is straightforward to check (see the discussion in \cite{MZ}) that $V(2,3)>2/3$, from which it follows that with 2 heads versus 1 tails, we should continue.

The third column of Table~\ref{theTable} (in the Appendix) shows positions for which we have determined that continuing is better than stopping. These results are based on a calculation with a horizon stretching out to $n=10^7$. They agree with \cite[Section 5]{MZ} with one exception: Medina and Zeilberger conjecture based on calculations with a horizon of 50000 that, in the notation of \cite{D, MZ, S}, $\beta_{127} = 9$, meaning that the difference (number of heads minus number of tails) required in order to stop after 127 flips is 9. Accordingly they suggest stopping with 68--59, but our computation shows that continuing is slightly better.

On the other hand, in order to conclude that stopping is ever optimal, we need a nontrivial upper bound on $V(a,n)$. Clearly such an upper bound cannot come from \eqref{rec} alone, since that equation is satisfied by $V(a,n)\equiv 1$.

\section{Upper bound on $V(a,n)$}
We let $\tilde{V}(a,n)$ be the expected payoff from position $(a,n)$ under \emph{infinite clairvoyance}, that is, assuming we have complete knowledge of the results of the future coin flips and stop when we reach the maximum proportion of heads. Obviously $V(a,n) \leq \tilde{V}(a,n)$, so that any upper bound on $\tilde{V}(a,n)$ is also an upper bound on $V(a,n)$.

\begin{Thm} \label{T:GrandUnifiedInequality}
\begin{equation} \label{GrandUnifiedInequality}  \tilde{V}(a,n) \leq \max\left(\frac{a}{n}, \frac12\right) + \min\left(\frac14\sqrt{\frac{\pi}{n}},\, \frac1{2\cdot\left|2a-n\right|}\right).\end{equation}
\end{Thm}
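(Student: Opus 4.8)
The plan is to analyze the clairvoyant process explicitly. Let $S_k$ be the number of heads among the next $k$ flips, let $R_k=2S_k-k$ be the corresponding lead, and write $d=2a-n$; then $(R_k)_{k\ge0}$ is a simple symmetric random walk started at $0$, the number of heads after $n+k$ flips is $a+S_k=\tfrac12\bigl((n+k)+d+R_k\bigr)$, and the proportion of heads after $n+k$ flips is
\[
\frac{a+S_k}{n+k}=\frac12+\frac{d+R_k}{2(n+k)}.
\]
Since the walk is recurrent, $\sup_{k}(d+R_k)\ge0$ almost surely, so $\sup_k\frac{d+R_k}{2(n+k)}\ge\max\bigl(\frac{d}{2n},0\bigr)$ almost surely; because $\max(\frac an,\frac12)=\frac12+\max(\frac{d}{2n},0)$, the layer-cake formula applied to the $[0,1]$-valued random variable $\sup_k\frac{a+S_k}{n+k}$ yields
\[
\tilde V(a,n)-m=\int_m^1\mathbb P\Bigl(\sup_{k\ge0}\tfrac{a+S_k}{n+k}\ge t\Bigr)\,dt,\qquad m:=\max\Bigl(\tfrac an,\tfrac12\Bigr).
\]
Everything reduces to bounding, for $t>m$, the probability that the proportion ever reaches $t$.

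Fix $t>\tfrac12$ and put $\epsilon=2t-1>0$. A one-line calculation rewrites ``the proportion at time $n+k$ is $\ge t$'' as ``$R_k-\epsilon k\ge\epsilon n-d=:h$'', and $h=2n\bigl(t-\tfrac an\bigr)\ge0$ throughout the range of integration. So I need an upper bound on the probability that the negative-drift walk $R_k-\epsilon k$ ever reaches the level $h\ge0$. For this I would use the exponential martingale: there is a unique $\lambda=\lambda(\epsilon)>0$ with $\cosh\lambda=e^{\lambda\epsilon}$, and then $e^{\lambda(R_k-\epsilon k)}$ is a nonnegative mean-one martingale tending to $0$ almost surely (since $R_k-\epsilon k\to-\infty$). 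Optional stopping at the first passage to level $h$ --- justified by a Fatou argument on the stopped martingale to accommodate the possibility that this time is infinite --- then gives $\mathbb P(\exists k:\ R_k-\epsilon k\ge h)\le e^{-\lambda(\epsilon)h}$. The remaining ingredient is the inequality $\lambda(\epsilon)\ge2\epsilon$, which follows from the elementary bound $\log\cosh x\le x^2/2$ (compare the two power series termwise): evaluated at $x=2\epsilon$ it says $\log\cosh(2\epsilon)\le2\epsilon^2=(2\epsilon)\epsilon$, so the convex function $\lambda\mapsto\log\cosh\lambda-\lambda\epsilon$, which vanishes at $0$ and at $\lambda(\epsilon)$, is still $\le0$ at $2\epsilon$, forcing $2\epsilon\le\lambda(\epsilon)$. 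Hence the probability that the proportion ever reaches $t$ is at most $e^{-2\epsilon(\epsilon n-d)}$.

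Feeding this into the integral, changing variables via $t=\tfrac{1+\epsilon}{2}$ (so $dt=d\epsilon/2$), enlarging the range of integration to $(\epsilon_0,\infty)$ where $\epsilon_0=2m-1$, and completing the square in the exponent, the estimate collapses to a single Gaussian integral:
\[
\tilde V(a,n)-m\ \le\ \frac12\int_{\epsilon_0}^{\infty}e^{-2n\epsilon^2+2d\epsilon}\,d\epsilon\ =\ \frac12\,e^{d^2/(2n)}\int_{|d|/(2n)}^{\infty}e^{-2nu^2}\,du,
\]
the lower limit working out to $|d|/(2n)$ in both cases $d\ge0$ and $d<0$. The two halves of the minimum in \eqref{GrandUnifiedInequality} now come from two standard tail estimates for this last integral. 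Using $\int_c^\infty e^{-\alpha u^2}\,du\le e^{-\alpha c^2}/(2\alpha c)$ with $c=|d|/(2n)$ (valid when $d\ne0$) cancels the factor $e^{d^2/(2n)}$ and leaves $\tfrac1{4|2a-n|}$; alternatively, shifting $u\mapsto u-|d|/(2n)$, discarding the resulting nonpositive cross term in the exponent, and using $\int_0^\infty e^{-2nw^2}\,dw=\tfrac12\sqrt{\pi/(2n)}$ leaves $\tfrac14\sqrt{\pi/(2n)}$, which is comfortably below the claimed $\tfrac14\sqrt{\pi/n}$ (the case $2a=n$ being covered by this second bound). Taking the smaller of the two gives the theorem.

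The conceptual crux --- and the step that circumvents the ``backward induction from infinity'' difficulty flagged in the introduction --- is the passage to the layer-cake representation, which trades the intractable supremum over an infinite horizon for a one-parameter family of first-passage probabilities of a \emph{transient} walk, each of which is routine to bound. After that, the exponential martingale together with $\lambda(\epsilon)\ge2\epsilon$ are the only genuine estimates, and what remains is Gaussian calculus. The point requiring the most care is the optional-stopping step: the first-passage time can be infinite, so the stopped martingale must be handled via Fatou rather than dominated convergence.
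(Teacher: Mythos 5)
Your argument is correct, and it in fact proves a slightly stronger bound, with error term $\min\bigl(\frac14\sqrt{\pi/(2n)},\,\frac1{4\left|2a-n\right|}\bigr)$. The skeleton coincides with the paper's: write $\tilde V(a,n)$ as $\max(a/n,1/2)$ plus an integral over thresholds of first-passage probabilities (the paper's $\int_0^1 P(a,n,p)\,dp$ is exactly your layer-cake formula), bound each first-passage probability by an exponential in the threshold and the current deficit, and finish with the same two Gaussian tail estimates to produce the two halves of the minimum. Where you genuinely diverge is in the first-passage estimate itself. The paper's Lemma~\ref{L:ollestrick} is a bare-hands exchangeability argument --- conditioned on the proportion eventually exceeding $p$, the next $k$ flips are all heads with conditional probability at least $p^k$, whence $P(a,n,p)\le(2p)^{-k}$ --- using no martingale theory and no optional stopping. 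You instead invoke the Wald exponential martingale for the drifted walk $R_k-\epsilon k$, with $\lambda(\epsilon)$ the positive root of $\cosh\lambda=e^{\lambda\epsilon}$ and the estimate $\lambda(\epsilon)\ge2\epsilon$ from $\log\cosh x\le x^2/2$. Measured per unit of the deficit $h=\epsilon n-d$, the paper's lemma (after its inequality $\log(1+t)/(1-t)\ge t$) yields decay rate $\epsilon$ where your martingale yields the asymptotically sharp rate $2\epsilon$; this factor of two in the exponent is precisely the source of your $\sqrt2$ and factor-$2$ improvements in the final constants. What the paper's route buys is complete elementarity (the authors themselves remark that the simplicity of the $1/(2\left|2a-n\right|)$ bound suggests a less computational proof should exist); what yours buys is sharper constants and a standard, reusable large-deviations mechanism, at the price of the optional-stopping/Fatou care that you correctly single out as the delicate step.
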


The first term of the right hand-side of \eqref{GrandUnifiedInequality} is equal to the lower bound \eqref{lower}, and thus the second term bounds the error in that approximation. The proof of Theorem~\ref{T:GrandUnifiedInequality} consists of Lemma~\ref{L:ollestrick} together with some calculations in the rest of Section~\ref{S:proof}.

Let us already here describe how we have used \eqref{GrandUnifiedInequality} computationally. We have computed upper bounds on $V(a,n)$ in a box stretching out to $n \leq N = 10^7$, and with height given by $\left|2a - n\right| \leq h$ for a fixed $h$ (thus the box includes points where $a$ deviates from $n/2$ by at most $h/2$). At the positions on the ``boundary'' of the box (more precisely, where $(a+1,n+1)$ or $(a,n+1)$ is outside the box), $V(a,n)$ has been estimated by \eqref{GrandUnifiedInequality}, whereas for the positions in the interior we have used \eqref{rec}, controlling the arithmetic so that all roundings go up, in order to achieve rigorous upper bounds.

The second term of the right hand-side of \eqref{GrandUnifiedInequality} gives two different upper bounds on the error in \eqref{lower}, where the bound $(1/4)\cdot\sqrt{\pi/n}$ is better close to the line $a=n/2$, while $1/(2\left|2a-n\right|)$ is the sharper one away from that line.
It seemed natural to choose the height $h$ of the box in such a way that these two bounds approximately coincide at the farther corners of the box, in other words so that $$\frac14\sqrt{\frac{\pi}{N}} \approx \frac1{2\cdot h},$$ that is, $h \approx (2/\sqrt{\pi})\cdot \sqrt{N}$. In our computations leading to the results of Table~\ref{theTable} (with $N=10^7$), we have taken $h=3568$. The second column of Table~\ref{theTable} lists positions for which we have determined that stopping is optimal. This includes 5 heads to 3 tails, a position discussed in \cite{MZ} and for which computational evidence \cite{MZ, W} strongly suggested that stopping should be optimal. To the best of our knowledge our computation provides the first rigorous verification of this fact.

\section{Proof of Theorem~\ref{T:GrandUnifiedInequality}} \label{S:proof}
For $a$ and $n$ as before, and $p\in [0,1]$, let $P(a,n,p)$ denote the probability that, starting from position $(a,n)$, at some point now or in the future the total proportion of heads will strictly exceed $p$. In other words $P(a,n,p)$ is the probability of success starting from $(a,n)$ if instead of trying to maximize expected payoff, we try to achieve a proportion of heads exceeding $p$, and continue as long as this has not been achieved. When $p$ is rational, $P(a,n,p)$ is algebraic and can in principle be calculated with the method of \cite{Stadje}, but we need an inequality that can be analyzed averaging over $p$.  

\begin{Lemma} \label{L:ollestrick}
Suppose that in position $(a,n)$, the nonnegative integer $k$ is such that at least $k$ more coin flips will be required in order to obtain a proportion of heads exceeding $p$. Then \begin{equation} \label{ineq} P(a,n,p) \leq \frac1{(2p)^k}.\end{equation}
\end{Lemma}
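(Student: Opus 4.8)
The plan is to prove \eqref{ineq} via a change of measure to a biased coin, or equivalently via an exponential supermartingale: under a coin with heads-probability $p$ the running proportion of heads is almost surely eventually pushed above $p$, while the likelihood ratio back to the fair coin is a power of $1/(2p)$.

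First dispose of the trivial ranges. If $p\le 1/2$ then $1/(2p)^k\ge 1\ge P(a,n,p)$, and if $p=1$ then the proportion can never strictly exceed $1$, so $P(a,n,p)=0$; thus assume $1/2<p<1$, and also $k\ge 1$ (for $k=0$ the claim is $P(a,n,p)\le 1$). The hypothesis says the proportion cannot exceed $p$ in fewer than $k$ flips, so in particular $\frac{a+(k-1)}{n+(k-1)}\le p$, i.e.\ $pn-a\ge (k-1)(1-p)$.

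Now flip the coin from $(a,n)$; let $X_m$ be the number of heads among the next $m$ flips, and let $\tau$ be the first $m\ge 1$ with $\frac{a+X_m}{n+m}>p$, so $P(a,n,p)=\mathbb P(\tau<\infty)$. Put $c=(2p)^{1/(1-p)}$ and $Z_m=c^{\,X_m-pm}$. The only computation to make is that $Z$ is a supermartingale, which amounts to $\tfrac12(c^{-p}+c^{1-p})\le 1$; since $c^{1-p}=2p$ this reads $(2p)^{-p/(1-p)}\le 2(1-p)$, and a short manipulation shows this is exactly the entropy inequality $-p\log p-(1-p)\log(1-p)\le\log 2$, hence holds. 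As $Z_0=1$ and $Z\ge 0$, Doob's maximal inequality gives $\mathbb P\bigl(\sup_m Z_m\ge\lambda\bigr)\le 1/\lambda$. On $\{\tau<\infty\}$ we have $X_\tau-p\tau>pn-a$, hence $Z_\tau>c^{\,pn-a}$, so $P(a,n,p)\le c^{-(pn-a)}=(2p)^{-(pn-a)/(1-p)}$.

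Feeding in $pn-a\ge(k-1)(1-p)$ already yields $P(a,n,p)\le(2p)^{-(k-1)}$ (the same bound follows from a one-line induction on $k$ using that $P(a,n,p)$ is nonincreasing in $n$). Squeezing out the remaining factor $2p$ is the crux: here one cannot merely use the real inequality $pn-a\ge(k-1)(1-p)$ but must exploit the integrality of $X_\tau$ and $\tau$ at the first-passage time. Concretely, in the change-of-measure identity $P(a,n,p)=\mathbb E_{\mathbb Q}\bigl[(2p)^{-X_\tau}(2(1-p))^{-(\tau-X_\tau)}\bigr]$ — with $\mathbb Q$ the $p$-biased law, under which $X_m-pm$ is a mean-zero walk and so $\tau<\infty$ a.s.\ — the integrand can slightly exceed $(2p)^{-k}$ on individual paths, so one must show it is at most $(2p)^{-k}$ \emph{in $\mathbb Q$-expectation}, using that winning at the earliest possible moment forces an unbroken run of heads. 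Carrying out that last bookkeeping precisely is the step I expect to be the main obstacle.
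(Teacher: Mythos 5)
Your supermartingale computation is sound as far as it goes: $Z_m=(2p)^{(X_m-pm)/(1-p)}$ is a nonnegative supermartingale (the verification does reduce to the entropy inequality as you say), and the maximal inequality yields $P(a,n,p)\le (2p)^{-(pn-a)/(1-p)}$. But this does not prove the lemma: the hypothesis only guarantees $pn-a\ge (k-1)(1-p)$, so you obtain $(2p)^{-(k-1)}$, and the missing factor of $2p$ that you defer as ``bookkeeping'' is precisely the content your method does not capture. It cannot be recovered from the overshoot of $Z$ at the first-passage time $\tau$: at $\tau$ one only knows $a+X_\tau\ge\lfloor p(n+\tau)\rfloor+1$, so the overshoot $X_\tau-p\tau-(pn-a)$ can be smaller than the $1-p$ that would be needed for an extra factor $c^{1-p}=2p$. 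The change-of-measure identity you write down at the end does not obviously close this gap either, and you do not carry it out. So as a proof of Lemma~\ref{L:ollestrick} the argument is genuinely incomplete.

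The paper's proof is quite different and entirely combinatorial. Condition on the event of success and on the \emph{last} time $m$ at which the proportion exceeds $p$; this event depends on the flips $n+1,\dots,m$ only through their total number of heads $h$, hence is invariant under permuting them, so conditionally all orderings are equally likely. Since $a+h>pm$ while the hypothesis gives $a+j\le p(n+j)$ for $j=0,\dots,k-1$, each ratio $\frac{h-j}{m-n-j}=\frac{(a+h)-(a+j)}{m-(n+j)}\ge p$, so the conditional probability that flips $n+1,\dots,n+k$ are all heads is at least $p^k$; comparing with the unconditional probability $2^{-k}$ gives \eqref{ineq}. Note that the hypothesis is invoked once for each of the $k$ flips, which is exactly where the full exponent $k$ (rather than $k-1$) comes from. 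One mitigating remark: the bound you do prove, $P(a,n,p)\le(2p)^{-(np-a)/(1-p)}$, is exactly the inequality the paper extracts from Lemma~\ref{L:ollestrick} before integrating over $p$, so your argument could serve as a drop-in replacement in the proof of Theorem~\ref{T:GrandUnifiedInequality} even though it does not establish the lemma as stated.
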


\begin{proof}
We can assume that $p>\max(a/n, 1/2)$, since otherwise the statement is trivial. From position $(a,n)$ condition on the event that the total proportion of heads will at some later point exceed $p$. Then, by the law of large numbers, there must be a \emph{maximal} $m$ such that after a total of $m$ coin flips the proportion of heads exceeds $p$. Conditioning further on $m$, the number of heads in coin flips number $n+1,\dots,m$ is determined, and all permutations of the outcomes of these $m-n$ coin flips are equally likely. The proportion of heads among these coin flips is at least $p$, so the (conditional) probability that coin flip $n+1$ results in heads is at least $p$. If $k>1$, then if coin flip $n+1$ was heads, the proportion of heads in flips $n+2,\dots,m$ is still at least $p$, so the probability of heads-heads in flips $n+1$ and $n+2$ is at least $p^2$ etc. Therefore the (conditional) probability that flips $n+1,\dots,n+k$ all result in heads is at least $p^k$, and since this holds for every $m$, we don't have to condition on a specific $m$, but only on the event that the proportion of heads will exceed $p$ at some point. 

Since the unconditional probability of $k$ consecutive heads is $1/2^k$, the statement now follows from a simple calculation: On one hand,
$$Pr(\text{$k$ consecutive heads } | \text{ proportion $p$ is eventually exceeded}) \geq p^k.$$
On the other hand,
\begin{multline} \notag Pr(\text{$k$ consecutive heads } | \text{ proportion $p$ is eventually exceeded}) \\ \leq 
\frac{Pr(\text{$k$ consecutive heads})}{Pr(\text{proportion $p$ eventually exceeded})} = \frac{(1/2)^k}{P(a,n,p)}.\end{multline}
Rearranging, we obtain \eqref{ineq}.
\end{proof}

Our next task is to use Lemma~\ref{L:ollestrick} to estimate $\tilde{V}(a,n)$. We have \begin{equation} \label{integration} \tilde{V}(a,n) = \int_0^1 P(a,n,p)\,dp = \max\left(\frac an, \frac12\right) + \int_{\max\left(\frac an, \frac12\right)}^1 P(a,n,p)\,dp.\end{equation}
If $p>\max(a/n, 1/2)$, then the requirement that at least $k$ more coin flips are needed to obtain a proportion of heads exceeding $p$ is equivalent to $$\frac{a+k-1}{n+k-1} \leq p,$$ which we rearrange as $$k\leq 1 + \frac{np-a}{1-p}.$$ 
Since there is always an integer $k$ in the interval $$\frac{np-a}{1-p} \leq k \leq 1 +  \frac{np-a}{1-p},$$ we conclude using Lemma~\ref{L:ollestrick} that for $p$ in the range $\max(a/n,1/2) < p < 1$ of integration in \eqref{integration}, $$P(a,n,p) \leq \frac{1}{(2p)^{\frac{np-a}{1-p}}}.$$
It follows that $$\tilde{V}(a,n) \leq \max\left(\frac{a}{n},\frac12\right) + \int_{\max\left(\frac{a}{n},\frac12\right)}^1 \frac{dp}{(2p)^{\frac{np-a}{1-p}}}.$$
By the substitution $2p=1+t$ and the elementary inequality $$\frac{\log(1+t)}{1-t} \geq t,$$ we obtain 
\begin{multline}\tilde{V}(a,n) \leq \max\left(\frac{a}{n},\frac12\right) + \frac12\int_{\max\left(\frac{2a-n}{n},0\right)}^1 \frac{dt}{(1+t)^{\frac{(1+t)n-2a}{1-t}}} 
\\= \max\left(\frac{a}{n},\frac12\right) + \frac12\int_{\max\left(\frac{2a-n}{n},0\right)}^1 \exp\left(-\frac{(1+t)n-2a}{1-t}\cdot \log(1+t)\right)\,dt
\\ \leq \max\left(\frac{a}{n},\frac12\right) + \frac12\int_{\max\left(\frac{2a-n}{n},0\right)}^1 \exp\left(-(1+t)tn+2at\right)\,dt.
\end{multline} 
By putting $u=t\sqrt{n}$ and replacing the upper bound of integration by infinity, we arrive at 
\begin{equation} \label{maxcases} \tilde{V}(a,n) \leq \max\left(\frac{a}{n},\frac12\right) + \frac1{2\sqrt{n}}\int_{\max\left(\frac{2a-n}{\sqrt{n}},0\right)}^\infty \exp\left(-u^2 + \frac{2a-n}{\sqrt{n}}\cdot u\right)\,du.\end{equation}
Now notice that by the substitution $w= u - (2a-n)/\sqrt{n}$, 
\begin{equation}
\int_{\frac{2a-n}{\sqrt{n}}}^\infty \exp\left(-u^2+\frac{2a-n}{\sqrt{n}}\cdot u\right)\, du = \int_0^\infty \exp\left(-w^2-\frac{2a-n}{\sqrt{n}}\cdot w\right)\, dw.\end{equation}
Therefore regardless of the sign of $2a-n$, \eqref{maxcases} can be written as
\begin{equation}\label{uniform} \tilde{V}(a,n) \leq \max\left(\frac{a}{n},\frac12\right) + \frac1{2\sqrt{n}}\int_0^\infty \exp\left(-u^2 - \frac{\left|2a-n\right|}{\sqrt{n}}\cdot u\right)\,du.\end{equation}

The bound \eqref{uniform} can be used directly in computations by first tabulating values of the integral, but we have chosen to simplify the error term further (instead spending computer resources on pushing the horizon).
We can discard either of the two terms inside the exponential in \eqref{uniform}. On one hand, the error term is at most  
$$\frac1{2\sqrt{n}}\int_0^\infty \exp\left(-u^2\right)\,du = \frac14\sqrt{\frac{\pi}{n}}.$$
On the other hand, it is also bounded by 
$$ \frac1{2\sqrt{n}}\int_0^\infty\exp\left(-\frac{\left|2a-n\right|}{\sqrt{n}}\cdot u\right)\,du =\frac1{2\cdot\left|2a-n\right|}.$$
This completes the proof of Theorem~\ref{T:GrandUnifiedInequality}. 

In the latter case, $\left|2a-n\right|$ is the absolute difference between the number of heads and the number of tails. The simplicity of the inequality $\tilde{V}(a,n) \leq \max(a/n,1/2) + 1/(2\left|2a-n\right|)$ suggests that there might be a proof involving considerably less calculation. 

Theorem~\ref{T:GrandUnifiedInequality} allows us to calculate $V(a,n)$ to any desired precision. This is because \eqref{rec} has the property that if $V(a,n+1)$ and $V(a+1,n+1)$ are both known with an error of at most $\varepsilon$, then the same is true of $V(a,n)$. To obtain the desired level of precision, we therefore only need to start our calculation from a horizon where the error term in \eqref{GrandUnifiedInequality} is sufficiently small.

On the other hand it is difficult to say in advance how far we have to take our computations in order to find \emph{the optimal decision} in a given position, as the expected payoff on continuing may be very close to the payoff $a/n$ on stopping. For instance, we have no idea how hard it will be to find the optimal decision in the position 116--104 (the first one whose status we haven't determined). For all we know the question whether stopping is optimal in this position might be undecidable by our method, although this would require the expected payoff on continuing to miraculously be exactly equal to the payoff on stopping.

\newpage

\section{Appendix: Computational results}
We have computed upper and lower bounds on $V(a,n)$ for $(a,n)$ satisfying $n\leq 10^7$ and $\left|a-n/2\right| \leq 1784 \approx \sqrt{10^7/\pi}$. These results allow us to find the optimal decision in most positions early in the game. It is better to continue precisely when $V(a,n) > a/n$, while stopping is optimal when $V(a,n) = a/n$.

We have included the results relevant to a total of at most 1000 coin flips, and in this range we have determined optimal play for all except seven positions.

If the number $a$ of heads is not greater than $n/2$, continuing is always better than stopping. If $a > n/2$, then to read Table~\ref{theTable}, consider the difference $2a-n = a - (n-a)$ of the number of heads to the number of tails. It turns out (as is easily shown by a coupling argument) that for a fixed difference, the optimal decision will be to stop if $n$ is below a certain threshold, and to continue if $n$ is above that threshold. 

If for instance we have 19 heads against 14 tails, the difference is 5. According to the table, stopping is best even up to 23--18, so we stop. As can be seen in the table, the opening theory is complete up to difference 11, while for difference 12 the status of the position 116--104 is still unknown.

For the position 16--12, the decision is extremely close, and a run with $N=10^6$ fails to determine the optimal decision, giving an upper bound of $0.5714326$ on continuing compared to the payoff $16/28 \approx 0.57142857$ on stopping. A run with $N=10^7$ shows that the expected payoff on continuing is between 0.5714192 and 0.5714278, revealing that stopping is optimal. 

For $V(0,0)$, Julian Wiseman gives the lower bound $0.7929534812$ based on a calculation \cite{W} much more extensive than ours (with a horizon of $N=2^{28} \approx 268,000,000$) and suggests $0.79295350640$ as an approximation of the true value. Our bounds obtained with $N=10^7$ are $$0.79295301268091 < V(0,0) < 0.79295559864361.$$

\newpage
\begin{table} [h]
\begin{center}
\begin{tabular} {ccc}
difference & stop with & but go with\\
1 & 1--0 & 2--1 \\
2 & 5--3 & 6--4\\
3 & 9--6 & 10--7\\
4 & 16--12 & 17--13\\
5 & 23--18 & 24--19\\
6 & 32--26 & 33--27\\
7 & 42--35 & 43--36\\
8 & 54--46 & 55--47\\
9 & 67--58 & 68--59\\
10 & 82--72 & 83--73\\
11 & 98--87 & 99--88\\
12 & 115--103 & 117--105\\
13 & 134--121 & 135--122\\
14 & 155--141 & 156--142\\
15 & 176--161 & 177--162\\
16 & 199--183 & 201--185\\
17 &  224--207 & 225--208 \\
18 & 250--232 & 251--233\\
19 & 277--258 & 279--260\\
20 & 306--286 & 307--287\\
21 & 336--315 & 338--317\\
22 & 368--346 & 369--347 \\
23 & 401--378 & 402--379\\
24 & 435--411 & 437--413\\
25 & 471--446 & 473--448\\
26 & 508--482 & 510--484\\
$\geq 27$ & stop & 

\end{tabular}
\end{center}
\caption{Opening theory for the first 1000 steps of the Chow-Robbins game. If the difference (number of heads $-$ number of tails) is non-positive, we always continue. If the difference is 27 or more and the total number of flips is at most 1000, stopping is optimal. For differences from 1 to 26, stopping is optimal up to and including the position in column 2, while continuing is optimal from the position in column 3 and on. There are seven positions in this range for which we have not determined the optimal decision: 116--104, 200--184, 278--259, 337--316, 436--412, 472--447 and 509--483.}
\label{theTable}
\end{table}

\end{document}